\documentclass[reqno]{amsart}

\usepackage{amssymb}
\usepackage{graphicx}
\usepackage{amscd}
\usepackage[pagebackref]{hyperref}
\usepackage{color}
\usepackage{tabularx}
\usepackage[table]{xcolor}
\usepackage{float}
\usepackage{graphics,amsmath,amssymb}
\usepackage{amsthm}
\usepackage{amsfonts}
\usepackage{latexsym}
\usepackage{epsf}
\usepackage{xifthen}
\usepackage{mathrsfs}
\usepackage{dsfont}
\usepackage{makecell}
\usepackage{subfig}
\usepackage{amsmath}
\allowdisplaybreaks[4]
\usepackage{listings}
\usepackage{etoolbox}
\usepackage{fancyhdr}
\usepackage{pdflscape}
\usepackage[title,toc,titletoc]{appendix}
\usepackage{enumitem}
\usepackage[noadjust]{cite}
\usepackage{tikz}
\usetikzlibrary{automata,positioning,arrows}
\usepackage{young}
\usepackage[object=vectorian]{pgfornament} 
\usepackage{lipsum,tikz}
\usepackage{multirow}
\usepackage[OT2,T1]{fontenc}
\usepackage{mathtools}
\usepackage{ytableau}

\hypersetup{
	colorlinks=true, 
	linktoc=all, 
	linkcolor=blue} 

\numberwithin{equation}{section}

\theoremstyle{theorem}
\newtheorem{theorem}{Theorem}[section]
\newtheorem*{theorem*}{Theorem}

\newtheorem{lemma}[theorem]{Lemma}

\newtheorem{conjecture}[theorem]{Conjecture}

\newtheorem{innercustomgeneric}{\customgenericname}
\providecommand{\customgenericname}{}
\newcommand{\newcustomtheorem}[2]{%
	\newenvironment{#1}[1]
	{%
		\renewcommand\customgenericname{#2}%
		\renewcommand\theinnercustomgeneric{##1}%
		\innercustomgeneric
	}
	{\endinnercustomgeneric}
}
\newcustomtheorem{ctheorem}{Theorem}
\newcustomtheorem{clemma}{Lemma}

\theoremstyle{definition}

\newtheorem*{example*}{Example}
\newtheorem*{examples*}{Examples}
\newtheorem{remark}[theorem]{Remark}
\newtheorem*{remark*}{Remark}
\newtheorem*{remarks*}{Remarks}
\newtheorem*{note*}{Note}

\newtheoremstyle{named}{}{}{\itshape}{}{\bfseries}{.}{.5em}{\thmnote{#3} #1}
\theoremstyle{named}

\newtheoremstyle{customized}{}{}{\itshape}{}{\bfseries}{.}{.5em}{\thmnote{#3}}
\theoremstyle{customized}

\DeclareMathAlphabet{\mydutchcal}{U}{dutchcal}{m}{n}
\newcommand{\dcA}{\mydutchcal{A}}
\newcommand{\dcT}{\mydutchcal{T}}
\newcommand{\dcW}{\mydutchcal{W}}

\newcommand{\bfb}{\mathbf{b}}
\newcommand{\bfc}{\mathbf{c}}
\newcommand{\bfd}{\mathbf{d}}
\newcommand{\bfn}{\mathbf{n}}
\newcommand{\bfs}{\mathbf{s}}
\newcommand{\bfv}{\mathbf{v}}

\newcommand{\bfx}{\mathbf{x}}
\newcommand{\bfA}{\mathbf{A}}

\newcommand{\bfT}{\mathbf{T}}
\newcommand{\bfone}{\mathbf{1}}

\newcommand{\trans}{\mathsf{T}}

\newcommand{\dd}{\operatorname{d}}

\title[Tadpole Nahm sum]{Tadpole Nahm sum as a Wronskian}

\author[S. Chern]{Shane Chern}
\address[S. Chern]{Fakult\"at f\"ur Mathematik, Universit\"at Wien, Oskar-Morgenstern-Platz 1, Wien 1090, Austria}
\email{chenxiaohang92@gmail.com, xiaohangc92@univie.ac.at}

\author[C. Tran]{Chanh Tran}
\address[C. Tran]{MathVision AI}
\email{chanh@mathvision.ai}

\author[T. Wakhare]{Tanay Wakhare}
\address[T. Wakhare]{{}\textsuperscript{(1)}Department of Electrical Engineering and Computer Science, MIT, Cambridge, MA 02139, USA \newline \indent{}\textsuperscript{(2)}MathVision AI}
\email{tanay@mathvision.ai, twakhare@mit.edu}

\thanks{Mathematical contributors: S. Chern, T. Wakhare; Engineering contributor: C. Tran.}

\date{}

\keywords{Tadpole Nahm sum, Wronskian, generalized theta series, Macdonald-type identity, modularity.}

\subjclass[2020]{11P84, 05E10, 33D15.}

\begin{document}
	
\sloppy

\begin{abstract}
	We show that the tadpole Nahm sum is essentially a specialization of a series studied by Bartlett and Warnaar. This overlooked connection was first discovered by the internal model \texttt{mathvision-harness-0.3} at MathVision AI. We then use Macdonald-type identities for the Bartlett--Warnaar series to express the principal tadpole Nahm sum and its twisted version in terms of a Wronskian of generalized theta series. The principal case confirms a conjecture of Milas and Wang.
\end{abstract}

\maketitle

\section{Introduction}

A \emph{Nahm sum of rank $r$} is a $q$-summation taking the form
\begin{align*}
	\sum_{\bfn_r = (n_1,\ldots,n_r) \in \mathbb{Z}^r} \frac{q^{\frac{1}{2} \bfn_r \mathbf{A} \bfn_r^\trans + \mathbf{B} \bfn_r^\trans + \mathbf{C}}}{(q)_{n_1}\cdots (q)_{n_r}},
\end{align*}
where $\mathbf{A}$ is an $r$-dimensional rational positive definite square matrix, $\mathbf{B}$ is an $r$-dimensional rational vector, $\mathbf{C}$ is a rational scalar, and the \emph{$q$-Pochhammer symbol} is defined for $n \in \mathbb{N} \cup \{\infty\}$:
\begin{align*}
	(a)_n = (a;q)_n := \prod_{k=0}^{n-1} (1-a q^k).
\end{align*}
A typical example arises from the \emph{Rogers--Ramanujan identity}~\cite{Rog1894,RR1919}:
\begin{align*}
	\sum_{n\ge 0} \frac{q^{n^2}}{(q;q)_n} = \frac{1}{(q;q^5)_\infty (q^4;q^5)_\infty}.
\end{align*}
In particular, the modularity of the Rogers--Ramanujan sum on the left-hand side is evident from the theta product on the right. In the study of Nahm sums, one of the most central questions, now referred to as \emph{Nahm's problem}~\cite{Nah1994,Nah1995}, is to decide their \emph{modularity}. Zagier~\cite{Zag2007} was the first to make major progress for lower rank cases --- he proved that there are only seven rank one modular Nahm sums, and further offered lists of conjectural candidates for rank two~\cite[p.~47, Table~2]{Zag2007} and three~\cite[p.~49, Table~3]{Zag2007} cases, which were later resolved in \cite{CF2013,CRW2024,Wan2024a,Wan2024b,VZ2011}.

Starting from the groundbreaking work of Lepowsky and Wilson~\cite{LW1984,LW1985}, connections between Nahm sums and representations of infinite-dimensional Lie algebras have been receiving widespread attention. With this setup, the Nahm sums are typically expected to exhibit modularity in accordance with the Kac--Peterson theory~\cite{Kac1984}. In 2016, Calinescu, Milas, and Penn~\cite{CMP2016} considered the principal subspace of the basic $A_{2r}^{(2)}$-module and showed that its full character is a certain Nahm sum related to the \emph{tadpole Dynkin diagram} $T_r := A_{2r}/\mathbb{Z}_2$ with $A_{2r}$ the type $A$ root lattice.

Let $\bfT_r$ be the \emph{Cartan matrix} of $T_r$ given by
\begin{align*}
	\bfT_r(i,j)_{1\le i,j\le r} := \begin{cases}
		1, & \text{if $i = j = r$},\\
		2, & \text{if $i = j < r$},\\
		-1, & \text{if $i = j \pm 1$},\\
		0, & \text{otherwise}.
	\end{cases}
\end{align*}
The \emph{tadpole Nahm sum of rank $r$} constructed in \cite[p.~1781, eq.~(5.19)]{CMP2016} is
\begin{align}
	\dcT_r(\bfx_r;q) = \dcT_r(x_1,\ldots,x_r;q) := \sum_{\bfn_r = (n_1,\ldots,n_r)\in \mathbb{Z}^r} \frac{q^{\frac{1}{2} \bfn_r \bfT_r \bfn_r^\trans} x_1^{n_1}\cdots x_r^{n_r}}{(q)_{n_1} \cdots (q)_{n_r}}.
\end{align}
We say the tadpole Nahm sum is \emph{principal} when we take $x_1=\cdots=x_r=1$.

Toward modularity, the rank two principal sum $\dcT_2(1,1;q)$ was confirmed in the affirmative by Calinescu, Milas, and Penn themselves in \cite[p.~1782, eq.~(6.1)]{CMP2016}, and subsequently similar results were established by Milas and Wang for the rank three sum~\cite[p.~77, eq.~(1.11)]{MW2024}, and by Shi and Wang in ranks four~\cite[p.~218, eq.~(1.20)]{SW2016} and five~\cite[p.~219, eq.~(1.25)]{SW2016}. In general, Calinescu, Milas, and Penn~\cite[p.~1781, Conjecture~6.1]{CMP2016} made the following modularity conjecture.

\begin{conjecture}[Calinescu--Milas--Penn]\label{conj:CMP}
	The principal tadpole Nahm sum $\dcT_r(1,\ldots,1;q)$, shifted by a rational power of $q$, is modular.
\end{conjecture}

To give a complete resolution of the Calinescu--Milas--Penn conjecture, Milas and Wang~\cite[p.~100, Conjecture~4.5]{MW2024} predicted an explicit expression for the principal tadpole Nahm sum, the equivalent form of which will be stated in the $\delta=0$ case of Theorem~\ref{th:main} below.

\begin{conjecture}[Milas--Wang]
	The principal tadpole Nahm sum $\dcT_r(1,\ldots,1;q)$, shifted by a rational power of $q$, can be explicitly expressed as the product of a modular infinite product and the Wronskian of a length-$r$ sequence of generalized theta series.
\end{conjecture}

\begin{remark}
	The modularity of such a Wronskian is a standard result; see the discussions in \cite[Section~4.2]{MW2024}. Therefore, Milas and Wang's explicit expression implies the Calinescu--Milas--Penn conjecture.
\end{remark}

Given functions $g_1(q),\ldots,g_r(q)$, their \emph{Wronskian} with respect to the operator $D:= q \frac{\dd}{\dd q}$ is defined by
\begin{align*}
	\dcW_D(g_1,\ldots,g_r) := \det\big(D^{i-1}g_j\big)_{1\le i,j\le r},
\end{align*}
where $D^k g$ means applying the operator $D$ to $g$ for $k$ iterations. For a formal series
\begin{align*}
	g(q) = \sum_{\substack{v\in \mathbb{Q}\\ v\ge V}} c_v q^v,
\end{align*}
with $V\in \mathbb{Q}$ and $c_V\ne 0$, define its \emph{normalization} by
\begin{align*}
	\widetilde{g}(q) := \sum_{\substack{v\in \mathbb{Q}\\ v\ge V}} \frac{c_v}{c_V} q^{v}.
\end{align*}
We denote by $\widetilde{\dcW}_D(g_1,\ldots,g_r)$ the normalization of $\dcW_D(g_1,\ldots,g_r)$.

Now for $m\in \mathbb{Z}_{>0}$ and $s\in \mathbb{Q}$ such that $|s| < \frac{m}{2}$, we introduce the \emph{generalized theta series}
\begin{align*}
	\Theta_{s,m}^{k,\epsilon}(q) := \sum_{\substack{v\in \mathbb{Q}\\ v\equiv s \bmod{m}}} \epsilon^{\frac{v-s}{m}} v^k q^{\frac{v^2}{2m}},
\end{align*}
where $k\in \mathbb{Z}$ and $\epsilon\in \{\pm 1\}$.

In this note, we not only prove the explicit expression for the principal sum $\dcT_r(1,\ldots,1,1;q)$ conjectured in \cite[p.~100, Conjecture~4.5]{MW2024}, but also consider the twisted version $\dcT_r(1,\ldots,1,q^{\frac{1}{2}};q)$. Our main result is stated as follows.

\begin{theorem}\label{th:main}
	For $\delta\in \{0,\frac{1}{2}\}$,
	\begin{align}
		\dcT_{r}(\underbrace{1,\ldots,1}_{\text{$r-1$ terms}},q^{\delta};q) = q^{-\frac{1}{8(r+2)}\binom{r+1-2\delta}{3}} \frac{(-q^{\frac{1}{2}+\delta})_\infty^{r}}{(q)_\infty^{\binom{r}{2}}} \,\widetilde{\dcW}_D\big(\Theta_{\frac{r+1}{2}-\delta-i,r+2}^{r-2\lfloor\frac{r}{2}\rfloor,(-1)^{r+1}}\big)_{1\le i\le \lfloor\frac{r}{2}\rfloor}.
	\end{align}
	Consequently, both $\dcT_{r}(1,\ldots,1,1;q)$ and $\dcT_{r}(1,\ldots,1,q^{\frac{1}{2}};q)$, shifted by a certain rational power of $q$, are modular.
\end{theorem}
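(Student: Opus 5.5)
The plan follows the abstract: first identify the tadpole Nahm sum with a specialization of the Bartlett--Warnaar series, then apply their Macdonald-type identity and convert the resulting determinant into a Wronskian.

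\textbf{Step 1: reduce to a sum over partitions.} Write $\frac12\bfn_r\bfT_r\bfn_r^\trans=\sum_{i<r}n_i^2-\sum_{i<r}n_in_{i+1}+\frac12 n_r^2$. Introduce partial sums $N_i=n_i+\cdots+n_r$, or equivalently view $(n_1,\dots,n_r)$ as column multiplicities of a partition with at most $r$ rows. The quadratic form then becomes $\sum_i \binom{\lambda_i'}{2}$-type statistics, up to a linear term. I would then match the summand with the $x$-specialized Hall--Littlewood expansion used by Bartlett--Warnaar, namely $\sum_\lambda q^{\cdots}P_\lambda(x;q)$ for type $C$/$BC$ root systems, with $x_i$ specialized to a geometric progression $q^{i-1+\delta}$-type. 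For this specialization $P_\lambda$ collapses to $\prod 1/(q)_{m_i}$-type factors, which is exactly the principal specialization formula. The parameter $\delta\in\{0,\frac12\}$ enters only through the last variable $x_r=q^\delta$, i.e.\ a linear term $\delta n_r$. I expect it to correspond to the choice between the two Bartlett--Warnaar identities, say $C_n^{(1)}$ versus $A_{2n}^{(2)}$ type. The factor $(-q^{1/2+\delta})_\infty^r$ should arise from the $q$-binomial/Euler sums left over after the identification.

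\textbf{Step 2: apply the Macdonald-type identity.} The Bartlett--Warnaar identity expresses the series as a product times a sum over $\mathbb{Z}^{m}$, with $m=\lfloor r/2\rfloor$, of a Weyl-denominator-type factor. Concretely, this is a Vandermonde-like product $\prod_{i<j}(v_i^2-v_j^2)\prod_i v_i^{k}$, with $k=r-2m\in\{0,1\}$, times $\prod_i \epsilon^{(\cdot)}q^{v_i^2/(2(r+2))}$, summed over $v_i\equiv s_i \bmod (r+2)$ with $s_i=\frac{r+1}{2}-\delta-i$. The sign $\epsilon=(-1)^{r+1}$ comes from the affine Weyl group sign.

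\textbf{Step 3: convert the determinant into a Wronskian.} Expand $\prod_{i<j}(v_i^2-v_j^2)=\det(v_j^{2(i-1)})$. Since $D q^{v^2/(2m)}=\frac{v^2}{2m}q^{v^2/(2m)}$, the summed determinant equals, up to a constant $(2(r+2))^{\binom m2}$ and a sign, $\det\big(D^{i-1}\Theta^{k,\epsilon}_{s_j,r+2}\big)$. This is the Wronskian; normalization absorbs the constant and sign, and the leading power of $q$ gives the prefactor $q^{-\binom{r+1-2\delta}{3}/(8(r+2))}$. I would confirm that exponent by computing the minimal exponent $\sum_i s_i^2/(2(r+2))$ and comparing it with the Nahm sum's leading term $1$.

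\textbf{Modularity.} This follows from the cited standard fact that Wronskians of such theta series are modular, combined with the eta-quotient nature of the product $(-q^{1/2+\delta})^r_\infty/(q)_\infty^{\binom r2}$.

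\textbf{Main obstacle.} The hardest step is Step 1: finding the exact specialization of the Bartlett--Warnaar series, including the parity issue between $r$ even and odd, and tracking how the leftover product $(-q^{1/2+\delta})_\infty^r/(q)_\infty^{\binom r2}$ emerges. I would try small $r$ numerically to pin the parameters before doing the general computation.
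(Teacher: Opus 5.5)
Your Steps 2 and 3 match the paper. Its Wronskian lemma is exactly your Vandermonde expansion, and normalization absorbs the constant $\big(-\tfrac{1}{2(r+2)}\big)^{\binom{\lfloor r/2\rfloor}{2}}$. Your leading-term check of the prefactor also works, because $4\sum_{i=1}^{\lfloor r/2\rfloor}\big(\tfrac{r+1}{2}-\delta-i\big)^2=\binom{r+1-2\delta}{3}$.

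The gap is Step 1, which carries the entire identification, and the mechanism you sketch cannot work. The denominators $(q)_{n_i}$ force the $n_i$ to play the role of the multiplicities $m_i(\lambda)$ of the partition indexing $P_\lambda$. The principal specialization $P_\lambda(1,q,q^2,\ldots;q)=q^{n(\lambda)}/\prod_i(q)_{m_i(\lambda)}$ then contributes $n(\lambda)=\sum_i\binom{\lambda_i'}{2}$, the statistic you predicted. Its quadratic part in the multiplicities is governed by $(\min(i,j))_{i,j}=\bfT_r^{-1}$: an Andrews--Gordon-shaped form with nonnegative cross terms. The tadpole form is $\frac12\bfn_r\bfT_r\bfn_r^\trans=\frac12\sum_{i=1}^r(n_i-n_{i-1})^2$ (with $n_0=0$), whose cross terms are $-n_in_{i+1}$. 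Already for $r=2$ your route gives $n_1^2+n_1n_2+\frac12 n_2^2$ instead of $n_1^2-n_1n_2+\frac12 n_2^2$, and no linear term repairs that. Likewise, the factor $(-q^{\frac12+\delta})_\infty^r$ does not fall out of such a matching.

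The missing idea is elementary: sum out the tadpole node first. Write
\[
\tfrac12\bfn_r\bfT_r\bfn_r^\trans=\tfrac12\bfn_{r-1}\bfA_{r-1}\bfn_{r-1}^\trans+\tbinom{n_r}{2}+n_r\big(\tfrac12-n_{r-1}\big).
\]
Euler's identity $\sum_{n\ge0}q^{\binom n2}z^n/(q)_n=(-z)_\infty$ with $z=x_rq^{\frac12-n_{r-1}}$ gives $(-x_rq^{\frac12-n_{r-1}})_\infty=(-x_rq^{\frac12})_\infty(-x_rq^{\frac12-n_{r-1}})_{n_{r-1}}$. Hence
\[
\dcT_r(\bfx_r;q)=(-x_rq^{\frac12})_\infty\,\dcA_{r-1}(\bfx_{r-1},x_r;q),
\]
a sum with the type $A_{r-1}$ Cartan matrix (not $\bfT_r^{-1}$) and a finite product attached to the last variable. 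At $\bfx_{r-1}=\bfone_{r-1}$ this is literally the $m=1$, $(u,w)=(1,0)$ case of Bartlett--Warnaar's series (3.21). Their Theorem 3.7 is where the Hall--Littlewood theory actually enters, and you can simply cite it; it makes their Section 6 Macdonald-type identities available at $m=1$. Those identities give $\dcA_{r-1}(\bfone_{r-1},q^{\delta};q)$ as $(-q^{\frac12+\delta})_\infty^{r-1}/(q)_\infty^{\binom r2}$ times the theta-type lattice sum. So one factor $(-q^{\frac12+\delta})_\infty$ comes from Euler's sum and the other $r-1$ come from Bartlett--Warnaar.

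There are also four identities, not two. The parity of $r$ fixes $k$, $\epsilon$ and the type of Weyl vector. Then $\delta$ selects $\bfc$ versus $\bfb$ when $r$ is odd, and $\bfb$ versus $\bfd$ when $r$ is even.
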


Our key observation is that the tadpole Nahm sum $\dcT_r(\bfx_r;q)$ is essentially a specialization of a series studied by Bartlett and Warnaar~\cite[p.~1085, eq.~(3.21)]{BW2015}. Moreover, Bartlett and Warnaar~\cite[Section~6]{BW2015} established a collection of Macdonald-type identities for their series. Thus, it only remains to rewrite these identities in terms of the desired Wronskians, as will be demonstrated in the next section.

\section{Proof of the main result}

Let $\bfA_r$ be the \emph{type $A_r$ Cartan matrix} wherein
\begin{align*}
	\bfA_r(i,j)_{1\le i,j\le r} := \begin{cases}
		2, & \text{if $i = j$},\\
		-1, & \text{if $i = j \pm 1$},\\
		0, & \text{otherwise}.
	\end{cases}
\end{align*}
We introduce the sum
\begin{align}
	\dcA_r(\bfx_r,z;q) = \dcA_r(x_1,\ldots,x_r,z;q) := \sum_{\bfn_r\in \mathbb{Z}^r} \frac{q^{\frac{1}{2} \bfn_r \bfA_r \bfn_r^\trans} x_1^{n_1}\cdots x_r^{n_r} (-zq^{\frac{1}{2}-n_r})_{n_r}}{(q)_{n_1} \cdots (q)_{n_r}}.
\end{align}

\begin{lemma}\label{le:H-F}
	We have
	\begin{align}\label{eq:H-F}
		\dcT_{r+1}(\bfx_{r+1};q) = (-x_{r+1}q^{\frac{1}{2}})_\infty \dcA_r(\bfx_r,x_{r+1};q).
	\end{align}
\end{lemma}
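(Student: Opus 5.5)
The plan is to expand the quadratic form of $\bfT_{r+1}$ with respect to the last variable $n_{r+1}$, and then perform the sum over $n_{r+1}$ in closed form using a $q$-binomial-type identity. The inner sum should produce the factor $(-x_{r+1}q^{\frac12})_\infty$ times the finite product $(-x_{r+1}q^{\frac12-n_r})_{n_r}$ appearing in $\dcA_r$.

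First, I compare the two Cartan matrices. $\bfT_{r+1}$ restricted to the first $r$ indices differs from $\bfA_r$ only in that $\bfA_r(r,r)=2$, and $\bfT_{r+1}(r,r)=2$ too (since $r<r+1$). So the upper-left $r\times r$ block of $\bfT_{r+1}$ is exactly $\bfA_r$. The remaining entries are $\bfT_{r+1}(r+1,r+1)=1$ and $\bfT_{r+1}(r,r+1)=-1$. Writing $m=n_{r+1}$ and $n=n_r$, this gives
\begin{align*}
\tfrac12\bfn_{r+1}\bfT_{r+1}\bfn_{r+1}^\trans=\tfrac12\bfn_r\bfA_r\bfn_r^\trans+\tfrac{m^2}{2}-nm.
\end{align*}
Hence
\begin{align*}
\dcT_{r+1}(\bfx_{r+1};q)=\sum_{\bfn_r}\frac{q^{\frac12\bfn_r\bfA_r\bfn_r^\trans}x_1^{n_1}\cdots x_r^{n_r}}{(q)_{n_1}\cdots(q)_{n_r}}\sum_{m\ge0}\frac{q^{\binom m2}(x_{r+1}q^{\frac12-n})^m}{(q)_m}.
\end{align*}
Note that $1/(q)_n=0$ for negative $n$, so all sums effectively run over nonnegative integers.

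Next, I apply Euler's identity $\sum_{m\ge0}q^{\binom m2}z^m/(q)_m=(-z)_\infty$ with $z=x_{r+1}q^{\frac12-n}$. The inner sum becomes $(-x_{r+1}q^{\frac12-n})_\infty$.

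Finally, I split this infinite product as
\begin{align*}
(-x_{r+1}q^{\frac12-n})_\infty=(-x_{r+1}q^{\frac12-n})_{n}\,(-x_{r+1}q^{\frac12})_\infty,
\end{align*}
which holds because the factors with exponents $\frac12-n,\dots,-\frac12$ are exactly the finite piece. Pulling the $n$-independent factor $(-x_{r+1}q^{\frac12})_\infty$ out of the sum leaves precisely the definition of $\dcA_r(\bfx_r,x_{r+1};q)$.

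There is no real obstacle here. The only care needed is in the block decomposition of the quadratic form: the cross term is $-n_rn_{r+1}$ (from two off-diagonal entries of $-1$, each halved), and the diagonal term is $\frac{m^2}{2}=\binom m2+\frac m2$. Getting these right is what puts the $q^{\frac12}$ shift inside Euler's identity. I would also justify the interchange of summations formally, as an identity of power series in $q^{1/2}$ with coefficients polynomial in the $x_i$.
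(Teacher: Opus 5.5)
Your proof is correct and is essentially the paper's argument: you note that the upper-left block of $\bfT_{r+1}$ is $\bfA_r$, isolate the $n_{r+1}$-dependence of the quadratic form, and sum over $n_{r+1}$ using Euler's second sum with $z=x_{r+1}q^{\frac12-n_r}$. You also write out the splitting $(-x_{r+1}q^{\frac12-n_r})_\infty=(-x_{r+1}q^{\frac12-n_r})_{n_r}(-x_{r+1}q^{\frac12})_\infty$, which the paper leaves implicit.
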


\begin{proof}
	Note that
	\begin{align*}
		\dcT_{r+1}(\bfx_{r+1};q) = \sum_{\bfn_r \in \mathbb{Z}^r} \frac{q^{\frac{1}{2} \bfn_r \bfA_r \bfn_r^\trans} x_1^{n_1}\cdots x_r^{n_r}}{(q)_{n_1} \cdots (q)_{n_r}} \sum_{n_{r+1}\ge 0} \frac{q^{\frac{1}{2}n_{r+1}^2 - n_r n_{r+1}} x_{r+1}^{n_{r+1}}}{(q)_{n_{r+1}}}.
	\end{align*}
	For the inner sum over $n_{r+1}$, we apply \emph{Euler's second sum} \cite[p.~354, eq.~(II.2)]{GR2004}:
	\begin{align*}
		\sum_{n\ge 0} \frac{q^{\binom{n}{2}}z^n}{(q)_n} = (-z;q)_\infty.
	\end{align*}
	The desired relation then follows.
\end{proof}

Write
\begin{align*}
	\bfone_r := (\underbrace{1,1,\ldots,1}_{\text{$r$ terms}}).
\end{align*}
The series $\dcA_r(\bfone_r,z;q)$ is the $m=1$ case of \cite[p.~1085, eq.~(3.21)]{BW2015} with $(u,w)\mapsto (1,0)$. It is important to note that the $m=1$ case of this Bartlett--Warnaar series equals a certain sum of modified Hall--Littlewood polynomials, as proven in \cite[p.~1086, Theorem~3.7]{BW2015}. As such, we may further take advantage of the Macdonald-type identities provided in \cite[Section~6]{BW2015}, originating from Macdonald's seminal work~\cite{Mac1972} on the Dedekind eta function.

For $\bfv_r = (v_1,\ldots,v_r)$ and $k\in \mathbb{Z}_{\ge 0}$, let
\begin{align*}
	|\bfv_r| := v_1 + \cdots + v_r,\qquad\qquad \lVert \bfv_r\rVert^2 := v_1^2 + \cdots + v_r^2.
\end{align*}
In addition, write
\begin{align*}
	\chi_k(\bfv_r) &:= \left(\prod_{i=1}^r v_i\right)^k \left(\prod_{1\le i<j \le r} (v_i^2 - v_j^2)\right).
\end{align*}
Define the classical \emph{Weyl vectors}:
\begin{align*}
	\bfb_r &:= (r-\tfrac{1}{2},\ldots,\tfrac{3}{2},\tfrac{1}{2}),\\
	\bfc_r &:= (r,\ldots,2,1),\\
	\bfd_r &:= (r-1,\ldots,1,0).
\end{align*}
The following Macdonald-type identities for the Bartlett--Warnaar series at $m=1$ were presented in \cite[Section~6]{BW2015} in an equivalent form:\footnote{Using the notation of Bartlett and Warnaar, these series are respectively $F_{m,2r}(0,1;q)$, $F_{m,2r-1}(0,1;q)$, $F_{m,2r}(0,q^{\frac{1}{2}};q)$, and $F_{m,2r-1}(0,q^{\frac{1}{2}};q)$ at $m=1$ in \cite[Section~6]{BW2015}. In particular, the Macdonald-type identities in \cite[Section~6]{BW2015} hold for $m=1$ due to \cite[p.~1086, Theorem~3.7]{BW2015}.}
\begin{subequations}\label{eq:BW-sum}
	\begin{align}
		&\dcA_{2r}(\bfone_{2r},1;q)\notag\\
		&\qquad = q^{-\frac{1}{8(2r+3)}\binom{2r+2}{3}} \frac{(-q^{\frac{1}{2}})_\infty^{2r}}{(q)_\infty^{2r^2+r}} \sum_{\substack{\bfv_r \in \mathbb{Z}^r\\ \bfv_r \equiv \bfc_r \bmod 2r+3}} \frac{\chi_1(\bfv_r)}{\chi_1(\bfc_r)} q^{\frac{\lVert \bfv_r\rVert^2}{4r+6}},\\
		&\dcA_{2r-1}(\bfone_{2r-1},1;q)\notag\\
		&\qquad = q^{-\frac{1}{8(2r+2)}\binom{2r+1}{3}} \frac{(-q^{\frac{1}{2}})_\infty^{2r-1}}{(q)_\infty^{2r^2-r}} \sum_{\substack{\bfv_r \in \frac{1}{2}\mathbb{Z}^r\\ \bfv_r \equiv \bfb_r \bmod 2r+2}} (-1)^{\frac{|\bfv_r| - |\bfb_r|}{2r+2}}\frac{\chi_0(\bfv_r)}{\chi_0(\bfb_r)} q^{\frac{\lVert \bfv_r\rVert^2}{4r+4}},\\
		&\dcA_{2r}(\bfone_{2r},q^{\frac{1}{2}};q)\notag\\
		&\qquad = q^{-\frac{1}{8(2r+3)}\binom{2r+1}{3}} \frac{(-q)_\infty^{2r}}{(q)_\infty^{2r^2+r}} \sum_{\substack{\bfv_r \in \frac{1}{2}\mathbb{Z}^r\\ \bfv_r \equiv \bfb_r \bmod 2r+3}} \frac{\chi_1(\bfv_r)}{\chi_1(\bfb_r)} q^{\frac{\lVert \bfv_r\rVert^2}{4r+6}},\\
		&\dcA_{2r-1}(\bfone_{2r-1},q^{\frac{1}{2}};q)\notag\\
		&\qquad = q^{-\frac{1}{8(2r+2)}\binom{2r}{3}} \frac{(-q)_\infty^{2r-1}}{(q)_\infty^{2r^2-r}} \sum_{\substack{\bfv_r \in \mathbb{Z}^r\\ \bfv_r \equiv \bfd_r \bmod 2r+2}} (-1)^{\frac{|\bfv_r| - |\bfd_r|}{2r+2}}\frac{\chi_0(\bfv_r)}{\chi_0(\bfd_r)} q^{\frac{\lVert \bfv_r\rVert^2}{4r+4}}.
	\end{align}
\end{subequations}

\begin{lemma}
	Let $m\in \mathbb{Z}_{>0}$. Suppose $s_1,\ldots,s_r\in \mathbb{Q}$ are such that $|s_i| < \frac{m}{2}$ for every $s_i$. Then
	\begin{align}\label{eq:Wron-theta-normal}
		\widetilde{\dcW}_D(\Theta_{s_1,m}^{k,\epsilon}, \ldots, \Theta_{s_r,m}^{k,\epsilon}) = \sum_{\substack{\bfv_r\in \mathbb{Q}^r\\ \bfv_r \equiv \bfs_r \bmod{m}}} \epsilon^{\frac{|\bfv_r|-|\bfs_r|}{m}} \frac{\chi_k(\bfv_r)}{\chi_k(\bfs_r)} q^{\frac{\lVert \bfv_r\rVert^2}{2m}}.
	\end{align}
\end{lemma}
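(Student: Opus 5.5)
We expand the Wronskian by multilinearity and then reduce the resulting Vandermonde-type determinant. Write $\Theta_j := \Theta_{s_j,m}^{k,\epsilon}$. Since $D q^{v^2/(2m)} = \frac{v^2}{2m} q^{v^2/(2m)}$, we have
\begin{align*}
	D^{i-1}\Theta_j = \sum_{v_j \equiv s_j \bmod m} \epsilon^{\frac{v_j-s_j}{m}} v_j^k \Big(\frac{v_j^2}{2m}\Big)^{i-1} q^{\frac{v_j^2}{2m}}.
\end{align*}
The $j$-th column of $\big(D^{i-1}\Theta_j\big)_{1\le i,j\le r}$ is therefore a single sum over $v_j$. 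Because the determinant is linear in each column, we may pull the $r$ sums out:
\begin{align*}
	\dcW_D(\Theta_1,\ldots,\Theta_r) = \sum_{\substack{\bfv_r\in\mathbb{Q}^r\\ \bfv_r\equiv \bfs_r \bmod m}} \epsilon^{\frac{|\bfv_r|-|\bfs_r|}{m}} \Big(\prod_{j} v_j\Big)^k q^{\frac{\lVert\bfv_r\rVert^2}{2m}} \det\Big(\big(\tfrac{v_j^2}{2m}\big)^{i-1}\Big)_{i,j}.
\end{align*}
The remaining determinant is a Vandermonde determinant in the variables $v_j^2/(2m)$. It equals $(2m)^{-\binom r2}\prod_{i<j}(v_j^2-v_i^2)$, which is $\pm(2m)^{-\binom r2}\prod_{i<j}(v_i^2-v_j^2)$ with a sign depending only on $r$. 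Hence
\begin{align*}
	\dcW_D = C \sum_{\bfv_r \equiv \bfs_r \bmod m} \epsilon^{\frac{|\bfv_r|-|\bfs_r|}{m}} \chi_k(\bfv_r)\, q^{\frac{\lVert\bfv_r\rVert^2}{2m}}, \qquad C=\pm(2m)^{-\binom r2}.
\end{align*}
Everything here converges absolutely as a formal $q$-series, since for each exponent only finitely many $\bfv_r$ contribute. So the interchange of sums with the determinant is harmless.

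We next normalize. Each $v_j$ ranges over $s_j+m\mathbb{Z}$, and $|s_j|<\frac m2$ forces $|v_j|>|s_j|$ for every $v_j\ne s_j$. The minimum of $\lVert\bfv_r\rVert^2$ is therefore attained uniquely at $\bfv_r=\bfs_r$, where the summand is $\chi_k(\bfs_r)q^{\lVert\bfs_r\rVert^2/(2m)}$ up to the constant $C$. All other terms carry strictly larger exponents.

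The main point requiring care is that this leading coefficient is nonzero, i.e.\ $\chi_k(\bfs_r)\ne0$. This needs $s_j\neq 0$ when $k>0$ and $s_i^2\ne s_j^2$ for $i\ne j$. If $\chi_k(\bfs_r)=0$ the right-hand side of \eqref{eq:Wron-theta-normal} is undefined anyway, so the statement implicitly assumes $\chi_k(\bfs_r)\neq 0$, and we assume it too; this holds in all later applications. Under this assumption, $V=\lVert\bfs_r\rVert^2/(2m)$ and $c_V=C\chi_k(\bfs_r)$. Dividing the expansion by $c_V$ cancels $C$, including its sign, and gives exactly \eqref{eq:Wron-theta-normal}.
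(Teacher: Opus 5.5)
Your proposal is correct and follows the same route as the paper. Both arguments expand the Wronskian column by column using multilinearity, evaluate the Vandermonde determinant in the variables $v_j^2/(2m)$ to obtain $(-\frac{1}{2m})^{\binom{r}{2}}\chi_k(\bfv_r)$, and normalize using that the lowest exponent is attained uniquely at $\bfv_r=\bfs_r$ because $|s_i|<\frac{m}{2}$. You also state explicitly the hypothesis $\chi_k(\bfs_r)\neq 0$, which the paper leaves tacit, and your Vandermonde exponent $i-1$ is the correct one.
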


\begin{proof}
	Given a family of formal series $f_1(q),\ldots,f_r(q)$ where for every $1\le i\le r$,
	\begin{align*}
		f_i(q) = \sum_{n\in \mathbb{Z}} \alpha_i(n) q^{\kappa_i(n)},
	\end{align*}
	with $\alpha_i: \mathbb{Z} \to \mathbb{C}$ and $\kappa_i: \mathbb{Z} \to \mathbb{Q}$, it is clear that
	\begin{align*}
		\dcW_D(f_1,\ldots,f_r) &= \sum_{\bfn_r \in \mathbb{Z}^r} \det\big(\alpha_j(n_j) \kappa_j(n_j)^i q^{\kappa_j(n_j)}\big)_{1\le i,j\le r}\\
		&= \sum_{\bfn_r \in \mathbb{Z}^r} q^{\sum_{j=1}^r \kappa_j(n_j)} \left(\prod_{j=1}^r \alpha_j(n_j)\right) \det\big(\kappa_j(n_j)^i\big)_{1\le i,j\le r}.
	\end{align*}
	By evaluating the Vandermonde determinant~\cite[p.~5, eq.~(2.1)]{Kra1999}, we have
	\begin{align}\label{eq:Wronskian-general}
		\dcW_D(f_1,\ldots,f_r) = \sum_{\bfn_r \in \mathbb{Z}^r} \left(\prod_{j=1}^r \alpha_j(n_j)\right) \left(\prod_{1\le i<j \le r} \kappa_j(n_j) - \kappa_i(n_i)\right) q^{\sum_{j=1}^r \kappa_j(n_j)}.
	\end{align}
	In \eqref{eq:Wronskian-general}, we substitute our generalized theta series $\Theta_{s_1,m}^{k,\epsilon},\ldots,\Theta_{s_r,m}^{k,\epsilon}$ and derive
	\begin{align*}
		\dcW_D(\Theta_{s_1,m}^{k,\epsilon}, \ldots, \Theta_{s_r,m}^{k,\epsilon}) = \sum_{\substack{\bfv_r\in \mathbb{Q}^r\\ \bfv_r \equiv \bfs_r \bmod{m}}} \epsilon^{\frac{|\bfv_r|-|\bfs_r|}{m}} \big({-\tfrac{1}{2m}}\big)^{\binom{r}{2}} \chi_k(\bfv_r) q^{\frac{\lVert \bfv_r\rVert^2}{2m}}.
	\end{align*}
	Finally, in this Wronskian, the lowest power of $q$ takes place when $\bfv_r = \bfs_r$ because $|s_i| < \frac{m}{2}$ for every $s_i$. The desired normalization then follows.
\end{proof}

In light of \eqref{eq:Wron-theta-normal}, the four Macdonald-type identities in \eqref{eq:BW-sum} can be reformulated in a uniform way as
\begin{align}
	\dcA_{r-1}(\bfone_{r-1},q^{\delta};q) = q^{-\frac{1}{8(r+2)}\binom{r+1-2\delta}{3}} \frac{(-q^{\frac{1}{2}+\delta})_\infty^{r-1}}{(q)_\infty^{\binom{r}{2}}} \,\widetilde{\dcW}_D\big(\Theta_{\frac{r+1}{2}-\delta-i,r+2}^{r-2\lfloor\frac{r}{2}\rfloor,(-1)^{r+1}}\big)_{1\le i\le \lfloor\frac{r}{2}\rfloor},
\end{align}
where $\delta\in \{0,\frac{1}{2}\}$. Now invoking \eqref{eq:H-F} immediately confirms the claimed Theorem~\ref{th:main}.

\section{Remarks on AI use}

MathVision,\footnote{Available at \url{https://mathvision.ai}.} an interactive AI workspace for mathematical research, was used during the development of this work. We highlight that through a few general prompts for literature search, the internal model \texttt{mathvision-harness-0.3} identified the overlooked connection between the tadpole Nahm sum and the Bartlett--Warnaar series, as recorded in Lemma~\ref{le:H-F}. The MathVision workspace was then used to explore this connection by one of the mathematical contributors (T.W.). On the other hand, with knowledge of the aforementioned connection, the other mathematical contributor (S.C.) decided to directly move on to the paper by Bartlett and Warnaar~\cite{BW2015}. In doing so, the arguments after Lemma~\ref{le:H-F} in the present work were completed independently, without consulting the AI outputs.

\subsection*{Acknowledgements}

Shane Chern was supported by the FWF Austrian Science Fund (No.~10.55776/F1002).

\bibliographystyle{amsplain}

\end{document}